\documentclass{amsart}
\usepackage[english]{babel}
\usepackage{amsfonts, amsmath, amsthm, amssymb,amscd,indentfirst}

\usepackage{esint}
\usepackage{amsmath,amssymb,latexsym,indentfirst}
\usepackage{times}
\usepackage{palatino}

\newtheorem{theorem}{Theorem}[section]

\newtheorem{proposition}{Proposition}[section]
\newtheorem{lemma}{Lemma}[section]
\newtheorem{definition}{Definition}[section]

\newtheorem{remark}{Remark}[section]

\def\Sc{{\mathcal S}}

\def\Yc{{\mathcal Y}}




\begin{document}

\title[Charged black holes]{Penrose inequalities and a positive mass theorem for charged black holes in higher dimension}

\author{Levi Lopes de Lima}
\author{Frederico Gir\~ao}
\author{Weslley Loz\'orio}
\address{Federal University of Cear\'a,
Department of Mathematics, Campus do Pici, Av. Humberto Monte, s/n, Bloco 914, Pici, 60455-760,
Fortaleza/CE, Brazil.}
\email{levi@mat.ufc.br}
\email{fred@mat.ufc.br}
\email{w2weslley@yahoo.com.br}
\author{Juscelino Silva}
\address{Federal University of Piau\'{\i}, Department of Mathematics, Campus Petronio Portela, Ininga, 64049-550, Teresina/PI, Brazil}
\email{jsilva@ufpi.edu.br}
\thanks{The first and second authors were partially supported by CNPq/Brazil grants. The first and last authors were partially supported by a CAPES/Brazil grant.}

\subjclass[2010]{Primary: 53C21, Secondary: 53C80, 53C44, 53C42}

\begin{abstract}
We use the inverse mean curvature flow to establish Penrose-type inequalities for time-symmetric Einstein-Maxwell initial data sets which can be suitably embedded as a hypersurface in Euclidean space $\mathbb R^{n+1}$, $n\geq 3$. In particular, we prove a positive mass theorem for this class of charged black holes.
As an application we show that the conjectured  upper bound for the area in terms of the mass and the charge, which in dimension $n=3$ is relevant in connection with the Cosmic Censorship Conjecture, always holds under the natural assumption that the horizon is stable as a minimal hypersurface.
\end{abstract}

\maketitle

\section{Introduction}
\label{intro}

Due mainly to its aesthetical appeal and increasingly experimental confirmation, General Relativity is usually regarded as a highly successful theory of space, time and matter at the classical level. It is well-known, however, that the theory still lacks physical consistency in respect to  the long-time behavior of solutions of the corresponding initial value formulation. More precisely, since it has been shown by Penrose and Hawking \cite{HE} that, under fairly natural assumptions, an initial data set propagates to a geodesically incomplete space-time, physical consistency  can only be achieved under the assumption that these singularities can not be seen by remote observers, being shielded from them by an event horizon. Roughly speaking, this is the content of the famous Cosmic Censorship Conjecture (CCC).

The notorious difficulty in deciding on the validity of this conjecture, which remains a fundamental open problem in gravitational collapse phys\-ics, is of course related to the fact that it concerns the long-term  behavior of solutions. This led Penrose \cite{P} to devise a test for falsifying the conjecture, which happens to be  formulated solely in terms of the initial data set. More precisely, he argued that if the conventional picture of gravitational collapse (CCC included) is taken for granted then there necessarily holds a lower bound for the total mass of vacuum black hole solutions in terms of the area of the outermost minimal surface enclosing the apparent horizon. Moreover, this bound should be saturated only by the Schwarzschild solution, which describes a spherically symmetric vacuum black hole.
In the asymptotically flat case, which corresponds to a vanishing cosmological constant, this inequality has been established for time-symmetric initial data sets by Huisken-Ilmanen \cite{HI} and Bray \cite{B} in dimension $n=3$ and by Bray-Lee \cite{BL} for $n\leq 7$; see
\cite{Ma} for a rather complete survey on these developments.
Recent contributions to the graph and conformally flat cases in all dimensions appear in \cite{L} \cite{dLG1} \cite{dLG2} \cite{FS} \cite{HW} \cite{MV}.

We mention that recently there has been much interest in extending this type of result to initial data sets arising from vacuum solutions of Einstein's field equations with a negative cosmological constant.
For instance, in dimension $n=3$ Lee and Neves \cite{LN} were able to use the inverse mean curvature flow to establish a Penrose-type inequality for  asymptotically locally hyperbolic manifolds in the so-called negative mass range. By using the method developed in \cite{dLG3} and \cite{dLG4},
where the conjectured Penrose inequalities have been established when the manifold can be appropriately embedded as a graph in hyperbolic space (see also \cite{DGS} for a previous contribution to the subject) the first two authors proved
an optimal Penrose inequality for 
a certain class of asymptotically locally hyperbolic graphs in any dimension $n\geq 3$ \cite{dLG5}. A key ingredient in this approach is a sharp Alexandrov-Fenchel-type inequality for a class of hypersurfaces in certain locally hyperbolic manifolds. This geometric inequality clearly has an independent interest and its proof uses the inverse mean curvature flow. 
In this regard we note that lately there has been considerable progress in establishing Alexandrov-Fenchel-type inequalities in a large class of warped products, including space forms; see for instance \cite{BHW} 
\cite{GWWX}
\cite{WX} and the references therein.

These advances settle, for initial data sets which can be suitably embedded as
graphs in appropriate ambient spaces, the conjectured Penrose inequalities for vacuum black holes, where only the gravitational contribution to the total energy is taken into account. The purpose of this note is to show how the graph method can be adapted to yield  Penrose-type inequalities for {\em charged} black holes in any dimension. Our  contributions in this direction (Theorems \ref{main0}, \ref{main1}, \ref{main2} and \ref{main3} below) do not provide the sharp form of the conjectured inequality in general but they show at least that the charge actually contributes to the total mass of the system in a way that depends explicitly on integral-geometric invariants of the horizon, with no reference whatsoever to the initial data set as a whole. In particular, we prove the optimal version of the positive mass theorem for this class of charged black holes, a result that in general does not seem to be approachable by other means in dimension $n\geq 4$.

Classically, i.e. in dimension $n=3$, the Penrose inequality for time-sym\-metric charged black holes with a connected horizon follows from the standard Geroch-Jang-Wald-Huis\-ken-Ilmanen monotonicity method based on the weak formulation of the inverse mean curvature flow. A detailed proof can be found in \cite{DK}, where actually the result is extended to the non-time-symmetric case under the assumption that a certain coupled Jang-IMCF system admits a solution.
Similarly to what happens in dimension $n=3$, the conjectured Penrose inequality (\ref{penineconj}) is equivalent to an upper and lower bound for the area of the horizon in terms of certain expressions involving the mass and the charge; see (\ref{penineconj2}) below.
In the classical situation, connectedness of the horizon is essential for the validity of the full inequality in view of the well-known counterexample in \cite{WY}. In fact, this counterexample only violates the lower bound, which is consistent with the fact that, as already   observed by Jang \cite{J}, in general only the upper bound follows from Penrose's argument.
We note, however, that in \cite{KWY1} it has been observed that, for connected horizons, 
the lower bound can be justified on physical grounds as well in the sense that it can be reduced 
to the positive mass theorem with charge, which of course is based on
CCC through Penrose's heuristic arguments; see also \cite{DKWY} where  the most general form of these lower
bounds has been established in a similar fashion.
On the other hand,  
the upper bound
has been recently established for charged black holes with multiply connected horizons in \cite{KWY2}, by means of Bray's conformal flow.
In our higher dimensional setting, it turns out that the positive mass inequality mentioned above immediately gives a  proof of the  upper bound for {\em any} graphical time-symmetric initial data set with a connected horizon which is stable as a minimal hypersurface; see Theorem \ref{main3} below. 
Also, we mention that
a Penrose-like inequality for general (not necessarily time-symmetric) initial data sets can be found in \cite{K}. We also note that an array of similar inequalities has been recently established  for dynamical black holes, notably in the axially symmetric case; we refer to \cite{Ma} and \cite{D} for  surveys of these aspects of the theory. Finally, we observe that most of the results presented here admit counterparts in the negative cosmological case. This issue will be addressed in a companion paper \cite{dLGLS}.

This paper is organized as follows.
In Section \ref{rnsol} we recall the  so-called Reissner-Nordstr\"om-Tangherlini (RNT) solution, which describes a spherically symmetric charged black hole in any space-time dimension $n+1\geq 4$.
The precise relationship between mass, area of the horizon and charge for this solution motivates the formulation of a Penrose inequality for time-symmetric charged black holes, which extends to higher dimensions the classical conjecture put forward by Jang \cite{J} for $n=3$; this is proposed in Section \ref{propo}. Finally, in Section \ref{main} we present the proofs of our main results.
The key ingredient is Proposition \ref{lowerb}, whose proof is based on the inverse mean curvature flow.

\vspace{0.2cm}
\noindent
{\bf Acknowledgements:} The authors would like to thank  S. Dain for pointing out that the graph method could provide effective results for charged black holes. Also, the authors would like to thank  M. Disconzi, M. Khuri and F. Schwartz for valuable comments and M. Mars for reading a previous version of this manuscript and contributing with suggestions that helped to substantially improve the presentation.

\section{The Reissner-Nordstr\"om-Tangherlini solution}\label{rnsol}

We start by recalling the so-called RNT solution in space-time dimension
$n+1\geq 4$,
given by
\begin{equation}\label{staticans}
\overline g_{m,q}=-\psi^2dt^2+g_{m,q}, \quad g_{m,q}=\psi^{-2}dr^2+r^2 h,
\end{equation}
where $t\in\mathbb R$, $r$ is a radial parameter varying in a certain open interval $I\subset (0,+\infty)$ and
\begin{equation}\label{psiform}
\psi(r)=\psi_{m,q}(r)=\sqrt{1-\frac{2m}{r^{n-2}}+\frac{q^2}{r^{2n-4}}},
\end{equation}
where $m$ and $q$ are real parameters \cite{T} \cite{KI}.
Thus, $\overline g=\overline g_{m,q}$ is defined on the product manifold $\mathbb R\times P$, $P=I\times \mathbb S^{n-1}$, where $(\mathbb S^{n-1},h)$ is  the standard round sphere.
These metrics are asymptotically flat, spherically symmetric solutions of the Einstein-Maxwell equations
\begin{equation}\label{field}
{\rm Ric}_{\overline g}-\frac{R_{\overline g}}{2}\overline g =T_F,\quad dF=0, \quad {\rm div}_{\overline g}F=0.
\end{equation}
where
${\rm Ric}_{\overline g}$ is the Ricci tensor of $\overline g$ and $R_{\overline g}={\rm tr}_{\overline g}{\rm Ric}_{\overline g}$ is the scalar curvature.
In this setting, the metric $\overline g$ represents the gravitational potential and $T_F$ is the energy-momentum tensor associated to the electromagnetic skew-symmetric $2$-form $F$ in the standard manner: we take
$T_F$ proportional to
\begin{equation}\label{enmomt}
\tilde T_{ij}=F_{ik}F_j^{\,k}-\frac{1}{4}F_{km}F^{km}\overline g_{ij}.
\end{equation}
For simplicity we assume that no  magnetic fields are present.

As we shall see in next section, the parameter $m>0$ can be identified with the total mass of the solution.
On the other hand,
the parameter $q\in \mathbb R$ is the electric charge, which is given by
\begin{equation}\label{chargeflux}
q=\frac{1}{\omega_{n-1}}\int_{\mathbb S_{r}}\langle E,\nu_{r}\rangle d\mathbb S_{r},
\end{equation}
where $\omega_{n-1}={\rm area}_{n-1}(\mathbb S^{n-1},h)$, $\mathbb S_{r}=\{r\}\times \mathbb S^{n-1}\subset P$ is a spherical slice, $\nu_{r}=\psi\partial_r$ is its outward unit normal and $E=q\nu_r/r^{n-1}$ is the electric field. This is consistent with the easily checked fact that ${\rm div}_{g_{m,q}}E=0$.

The nature of the interval $I$ is clarified by looking at the zeros of the expression inside the square root sign on the left-hand side of (\ref{psiform}), which are given by
\[
r_{\pm}=\left(m\pm\sqrt{m^2-q^2}\right)^{\frac{1}{n-2}}.
\]
Here we  assume that
\begin{equation}\label{pmtrnt}
m>|q|,
\end{equation}
so that only the outermost zero $r_+$ is relevant to our purposes. Thus, we take $I=(r_+,+\infty)$. In this case it can be shown that the solution (\ref{staticans})-(\ref{psiform}) extends smoothly to the interval $\overline I=[r_+,+\infty)$, with the hypersurface $r=r_+$ being null. Therefore, the RNT solution  has the causal structure of a charged black hole indeed.

\begin{remark}
\label{extremernt}
{\rm In case $m=|q|$ the horizons $r=r_\pm$ merge to yield a single horizon for the so-called {\em extremal RNT solution}, which plays a central role in black hole thermodynamics in the context of String Theory \cite{KL}. On the other hand, the case $m<|q|$ is not physically interesting since it displays a naked singularity at $r=0$.} 
\end{remark}

Regarding the Riemannian metric $g_{m,q}$ on the initial data set $t=0$,
notice that it satisfies
\begin{equation}\label{asymptflat}
|g_{m,q}-g_{0}|_{g_{0}}+ |dg_{m,q}|_{g_{0}}=O\left(r^{-(n-2)}\right),
\end{equation}
where $g_0=g_{0,0}=dr^2+r^2h$ is the standard Euclidean metric, thus confirming its asymptotically flat character; see Definition \ref{asymfdef} below.
Moreover,
its scalar curvature is
\[
R_{g_{m,q}} =  (n-1)(n-2)\frac{q^2}{r^{2n-2}},
\]
or in terms of the electric field,
\begin{equation}\label{neue}
R_{g_{m,q}} 
  =   (n-1)(n-2)|E|^2_{g_{m,q}}
 \end{equation}
Finally, note that the mass parameter $m$ can be expressed in terms of the charge $q$ and the area $|\mathbb S_{r_+}|$ of the horizon as
\begin{equation}\label{massf}
m=\frac{1}{2}\left(\left(\frac{|\mathbb S_{r_+}|}{\omega_{n-1}}\right)^{\frac{n-2}{n-1}}+
q^2\left(\frac{\omega_{n-1}}{|\mathbb S_{r_+}|}\right)^{\frac{n-2}{n-1}}\right).
\end{equation}

\section{Penrose inequalities for charged black holes}
\label{propo}

The asymptotic expansion (\ref{asymptflat}) suggests the consideration of the following well-known class of Riemannian manifolds.

\begin{definition}\label{asymfdef}
Let $(M^n,g)$ be a complete $n$-dimensional Riemannian manifold, possibly carrying a compact inner boundary $\Sigma$. For simplicity, we assume that $M$ has a unique end, say $E$. We say that $(M,g)$ is {\em asymptotically flat} (AF) if there exists a chart $\Psi$ taking $E$ to $\mathbb R^n-B_1(0)$, the complement of a ball in Euclidean space, so that, as $r\to +\infty$,
\begin{equation}\label{alhcond}
|\Psi_*g-g_{0}|_{g_{0}}+ |d\Psi_*g|_{g_{0}}=O\left(r^{-\sigma}\right),
\end{equation}
for some $\sigma>(n-2)/2$. We also assume that $R_{\Psi_*g}$ is integrable, where, as usual, $R$ denotes scalar curvature.
\end{definition}

For this kind of manifold,  the so-called {\em ADM mass} is defined as
\begin{equation}\label{massl}
\mathfrak m_{(M,g)}=\lim_{r\to +\infty}c_n\int_{\mathbb S^{n-1}_r}
  ({\rm div}_{g_{0}}e-d{\rm tr}_{g_{0}}e)(\nu_r)d\mathbb S_r,
\end{equation}
where $e=\Psi_*g-g_{0}$, $\mathbb S^{n-1}_r\subset\mathbb R^n-B_1(0)$  is a sphere of radius $r$, $\nu_r$ is its {\em outward} unit vector  and
\[
c_n=\frac{1}{2(n-1)\omega_{n-1}},
\]
where $\omega_{n-1}={\rm area}_{n-1}(\mathbb S^{n-1})$.

Let us consider from now on a time-symmetric Einstein-Maxwell initial data set $(M,g,E)$.
This means that
$(M,g)$ is an AF manifold of dimension $n\geq 3$ carrying an outermost compact and minimal hypersurface $\Sigma_0$ (the horizon) and satisfying the corresponding Hamiltonian constraint equation
\begin{equation}\label{const}
2\mu=R_g-(n-1)(n-2)|E|^2_g,
\end{equation}
where $\mu$ is the energy density and
 $E$ (the electric field) is a tangent vector field whose charge density vanishes:
\begin{equation}\label{chargde}
{\rm div}_gE=0.
\end{equation}
Moreover, in the given asymptotically flat coordinates at infinity, there holds
\begin{equation}\label{asymchar}
|E|=O(r^{-(n-1)}),
\end{equation}
so that the charge is defined by
\begin{equation}\label{chargedef}
Q=\lim_{r\to +\infty}\frac{1}{\omega_{n-1}}\int_{\mathbb S_r}\langle E,\nu_r\rangle d\mathbb S_r.
\end{equation}
In fact, in view of (\ref{chargde}) it follows that $Q$  is a quasi-local quantity in the sense that
\begin{equation}
Q=\frac{1}{\omega_{n-1}}\int_{\Sc}\langle E,\nu\rangle d\mathbb \Sc,
\end{equation}
for any hypersurface $\Sc\subset P$ which is homologous to the sphere at infinity.
Finally, we assume that the
corresponding dominant energy condition  $\mu\geq 0$ holds, which means that
\begin{equation}\label{decond}
R_g\geq (n-1)(n-2)|E|^2_g.
\end{equation}
Note that this energy condition differs from the standard one, since the contribution coming from the electromagnetic field has been discarded.

The discussion in the previous section shows that
the RNT solution $(P,g_{m,q})$ meets all of these conditions. Moreover, a straightforward computation shows that $\mathfrak m_{(Pg_{m,q})}=m$. Thus, in view of (\ref{massf}), it is natural to conjecture that for any initial data $(M,g)$ as above, the following Penrose inequality should hold:
\begin{equation}\label{penineconj}
{\mathfrak m}_{(M,g)}\geq \frac{1}{2}\left(\mathfrak R_{\Sigma_0}+
\frac{Q^2}{\mathfrak R_{\Sigma_0}}\right),
\quad \mathfrak R_{\Sigma_0}=\left(\frac{|\Sigma_0|}{\omega_{n-1}}\right)^{\frac{n-2}{n-1}},
\end{equation}
with the equality occurring if and only if $(M,g)$ is isometric to $(P,g_{m,q})$.
At least in dimension $n=3$, this conjecture is extensively discussed in  the literature; see for instance the classical presentation \cite{J}
and \cite{Ma} \cite{DK} \cite{K} \cite{KWY1} \cite{KWY2}
\cite{WY} for more modern accounts.
In this classical setting, we note that connectedness of $\Sigma_0$ is required in view of the counterexample in \cite{WY}, which is based on the so-called Majumdar-Papapetrou metrics and very likely exists in any dimension. 

\begin{remark}
\label{posrem}
{\rm Notice that, in the presence of the positive mass inequality
\begin{equation}
\label{posimassass}
\mathfrak m_{(M,g)}\geq |Q|,
\end{equation} 
which is conjectured to hold for any initial data set as above and is proved here in the graphical context (Theorem \ref{main0} below), 
(\ref{penineconj}) is equivalent to the inequalities
\begin{equation}\label{penineconj2}
{\mathfrak m}_{(M,g)}-\sqrt{{\mathfrak m}_{(M,g)}^2-Q^2}\leq\mathfrak R_{\Sigma_0}\leq
{\mathfrak m}_{(M,g)}+\sqrt{{\mathfrak m}_{(M,g)}^2-Q^2}.
\end{equation}
We note that  
the counterexample in \cite{WY} only violates the lower bound in (\ref{penineconj2}).
}
\end{remark}

In this paper we discuss the validity of these inequalities for initial data sets which can be appropriately embedded as hypersurfaces in Euclidean space $\mathbb R^{n+1}$. In our setting, the next definition finds its motivation in the fact that the RNT solution $g_{m,q}$  can be  isometrically embedded in $\mathbb R^{n+1}$ as the graph of a radially symmetric function $u=u(r)$, $r>r_+$, satisfying 
\begin{equation}
\label{realrnt}
\left(\frac{du}{dr}\right)^2=\frac{2mr^{n-2}-q^2}
{r^{2n-4}-2mr^{n-2}+q^2}.
\end{equation}

\begin{definition}\label{alshyp}
We say that a complete hypersurface $(M,g)\subset \mathbb R^{n+1}$, possibly carrying a compact inner boundary $\Sigma_0$, is {\em asymptotically flat} if there exists a compact set $K\subset M$ so that $M-K$ can be written as a graph over the end $E_0$ of the horizontal slice
 $\mathbb R^n_t= \{t\}\times \mathbb R^n$, with the graph being  associated to a smooth function $u$ such
that, in the nonparametric chart $\Psi_u(x,u(x))=x$, $x\in E_0$, there holds
\[
u_i(x)=O(|x|^{-\sigma/2}), \quad |x|u_{ij}(x)+|x|^2u_{ijk}
=O(|x|^{-\sigma/2}),
\]
as $|x|\to +\infty$
for some $\sigma>(n-2)/2$.
Moreover, we assume that $R_{{\Psi_u}_*g}$ is integrable.
\end{definition}

Under these conditions, the mass $\mathfrak m_{(M,g)}$ can be computed by taking $\Psi=\Psi_u$ in (\ref{massl}). More precisely, if we assume further that the inner boundary $\Sigma_0$ lies on some totally geodesic horizontal slice $\mathbb R^n_t$, which we of course identify with $\mathbb R^n$, and moreover that the intersection  $M\cap \mathbb R^n_t$ is orthogonal along $\Sigma$, so that $\Sigma\subset M$ is totally geodesic (and hence a horizon indeed) then the computations in \cite{dLG2} provide the following integral formula for the mass:
\begin{equation}\label{massform}
\mathfrak m_{(M,g)}=c_n\int_{\Sigma_0}  Hd\Sigma +c_n\int_M\Theta R_g dM,
\end{equation}
where $\Theta=\langle \partial/\partial t,N\rangle$, $N$ is the unit normal to $M$, which we choose so as to point upward at infinity, and $H$ is the mean curvature of $\Sigma\subset \mathbb R^n_t$ with respect to its {inward} pointing unit normal.

This expression is the starting point in the proof of our main results, which we now pass to describe.
The first one provides a lower bound for the mass in terms of the total mean curvature of the horizon viewed as a hypersurface in $\mathbb R^n$. Recall that a hypersurface $(\Sigma_0,k)\subset (\mathbb R^{n},g_0)$ is {\em mean-convex} if it satisfies $H\geq 0$ and $2$-{\em convex} if it further satisfies $R_k\geq 0$.

\begin{theorem}\label{main0}
Let $(M,g)$ be an Einstein-Maxwell initial data set which can be isometrically embedded in $\mathbb R^{n+1}$ as explained above. Assume also that its horizon $(\Sigma_0,k)$, viewed as a hypersurface in $(\mathbb R^{n},g_0)$, is mean convex and  star-shaped. Then there holds
\begin{equation}\label{mainineq}
\mathfrak m_{(M,g)}\geq \frac{1}{2}\left(\left(2c_n\int_{\Sigma_0}Hd\Sigma_0\right)+ Q^2\left(2c_n \int_{\Sigma_0}Hd\Sigma_0\right)^{-1}\right),
\end{equation}
In particular, the positive mass theorem holds for $(M,g)$:
\begin{equation}\label{pmtem}
\mathfrak m_{(M,g)}\geq |Q|.
\end{equation}
Moreover, if the equality holds in (\ref{pmtem}) then we actually have
\begin{equation}\label{pmtem2}
\mathfrak m_{(M,g)}\geq \mathfrak R_{\Sigma_0},
\end{equation}
so that $(M,g)$ is {\em not} isometric to a RNT solution with $m>|q|$.
\end{theorem}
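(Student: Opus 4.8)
The plan is to feed the mass formula (\ref{massform}) into an inverse-mean-curvature-flow estimate for the electromagnetic contribution, and then close the argument with two elementary inequalities. Write $\alpha:=2c_n\int_{\Sigma_0}H\,d\Sigma_0$, so that (\ref{massform}) reads $\mathfrak m_{(M,g)}=\tfrac12\alpha+c_n\int_M\Theta R_g\,dM$. Because $M$ is a graph with upward-pointing normal, $\Theta>0$ throughout, so the dominant energy condition (\ref{decond}) lets me discard the purely gravitational part of the scalar curvature and keep only the charge: $c_n\int_M\Theta R_g\,dM\geq c_n(n-1)(n-2)\int_M\Theta|E|^2_g\,dM$. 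The whole theorem then reduces to the single estimate $c_n(n-1)(n-2)\int_M\Theta|E|^2_g\,dM\geq Q^2/(2\alpha)$, which is exactly the content I would isolate as Proposition \ref{lowerb}.

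To prove that estimate I would run the inverse mean curvature flow starting from $\Sigma_0$ inside the base $(\mathbb R^n,g_0)$; star-shapedness together with mean convexity guarantees, by the theorems of Gerhardt and Urbas, a smooth global solution $\{\Sigma_t\}_{t\geq 0}$ foliating the exterior of $\Sigma_0$, with $|\Sigma_t|=|\Sigma_0|e^t$ and rounding leaves. Since $\Theta\,dM$ is the flat volume element of the base, the weighted energy becomes $\int_{\mathbb R^n\setminus\Omega_0}|E|^2_g\,dx$, which the coarea formula splits as $\int_0^\infty\!\int_{\Sigma_t}|E|^2_g H^{-1}\,d\sigma\,dt$. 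On each leaf I would combine conservation of charge, $\int_{\Sigma_t}\langle E,\nu\rangle=\omega_{n-1}Q$, with Cauchy--Schwarz to reach a per-leaf bound of the shape $\int_{\Sigma_t}|E|^2_g H^{-1}\,d\sigma\geq\omega_{n-1}^2Q^2\big(\int_{\Sigma_t}H\,d\sigma\big)^{-1}$. The flow then supplies the needed monotonicity: differentiating $\mathcal H(t):=\int_{\Sigma_t}H\,d\sigma$ and using $\int_{\Sigma_t}\Delta(1/H)=0$ together with $|A|^2\geq H^2/(n-1)$ gives $\mathcal H'(t)\leq\frac{n-2}{n-1}\mathcal H(t)$, hence $\mathcal H(t)\leq\mathcal H(0)e^{\frac{n-2}{n-1}t}$. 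Integrating $1/\mathcal H$ in $t$ produces the factor $\frac{n-1}{n-2}$ and, after unwinding the constants (recall $c_n=[2(n-1)\omega_{n-1}]^{-1}$), precisely the bound $Q^2/(2\alpha)$. A direct check on the Reissner--Nordstr\"om--Tangherlini data, where $u$ is radial and every inequality above is an equality, confirms the constant is sharp, so (\ref{mainineq}) follows.

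The remaining assertions are then purely formal. Inequality (\ref{pmtem}) is the arithmetic--geometric mean inequality $\tfrac12(\alpha+Q^2/\alpha)\geq|Q|$ applied to (\ref{mainineq}). For the rigidity statement, the Minkowski (Alexandrov--Fenchel) inequality for mean-convex star-shaped hypersurfaces --- itself a by-product of the same flow --- gives $\alpha=2c_n\int_{\Sigma_0}H\geq\mathfrak R_{\Sigma_0}$, with equality only for round spheres. If equality holds in (\ref{pmtem}), the arithmetic--geometric mean step must be saturated, forcing $\alpha=|Q|$; hence $\mathfrak m_{(M,g)}=|Q|=\alpha\geq\mathfrak R_{\Sigma_0}$, which is (\ref{pmtem2}). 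Since any RNT solution with $m>|q|$ satisfies $\mathfrak m=m>|q|=|Q|$, it can never realize equality in (\ref{pmtem}), so such data are excluded.

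I expect the genuine difficulty to lie entirely in the per-leaf Cauchy--Schwarz step of Proposition \ref{lowerb}. The charge is a flux measured in the graph metric $g$ on the lifted leaves $\widehat\Sigma_t\subset M$, whereas the foliation, the mean curvature $H$ and the area element $d\sigma$ live in the flat base; reconciling these --- so that the conserved flux is paired against the flat energy integral with no spurious factors of $W=\sqrt{1+|\nabla u|^2}$ surviving (these do vanish on the horizon, where $\nabla^{\Sigma}u=0$, and in the radial RNT model, but not obviously elsewhere) --- is the delicate point where the graph geometry and the base flow interact. By contrast, the monotonicity of $\mathcal H$, the arithmetic--geometric mean step, and the Minkowski inequality are robust and essentially standard.
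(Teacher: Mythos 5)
Your outline reproduces the paper's own strategy: the same reduction of (\ref{massform}) via $\Theta\,dM=d\Omega$ and the energy condition (\ref{decond}), the same bulk estimate (the paper's Proposition \ref{lowerb}), the same IMCF/coarea/Cauchy--Schwarz/monotonicity chain with the same constants, and the same AM--GM plus Guan--Li endgame for (\ref{pmtem}) and (\ref{pmtem2}). However, the step you defer as ``the delicate point'' is not a technicality that can be left open; it is precisely the missing lemma, and without it the per-leaf estimate never gets off the ground. Your argument needs a vector field on $\Omega$ that is \emph{simultaneously} (i) divergence-free with respect to the flat metric $g_0$, so that its flat flux through every leaf $\Sigma_t$ of the Euclidean flow equals $\omega_{n-1}Q$, and (ii) pointwise dominated in flat norm by $|E|_g$, so that it can be absorbed into the coarea integrand $|E|_g^2/H$. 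The field $E$ itself satisfies neither: since $\det g=W^2$, the constraint ${\rm div}_g E=0$ says $\partial_i(WE^i)=0$, i.e.\ it is $WE$, not $E$, that is flat divergence-free, so the flat flux of $E$ through $\Sigma_t$ is not conserved along the flow. And the natural substitute $\hat E=WE$ fails (ii): writing $g_{ij}=\delta_{ij}+u_iu_j$ one computes $|WE|^2_{g_0}=|E|^2_{g_0}+|\nabla u|^2|E|^2_{g_0}\geq |E|^2_{g_0}+\langle\nabla u,E\rangle^2_{g_0}=|E|^2_g$, so the comparison runs in the \emph{wrong} direction, with equality only where $E$ is parallel to $\nabla u$ --- which is exactly why your sanity check on the radial RNT data does not detect the problem.

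The paper closes this gap with Lemma \ref{dkhu}: there exists $\hat E$ on $\Omega$ with ${\rm div}_{g_0}\hat E=0$, $|\hat E|_{g_0}\leq |E|_g$, and the same charge $\hat Q=Q$. The proof is short but genuinely nontrivial: one observes that the \emph{vertical} graph of $u$ inside the Lorentzian product $(M\times\mathbb R,\,g-dt^2)$ has induced metric $g_{ij}-u_iu_j=\delta_{ij}$, hence is an isometric copy of the flat $\Omega$, and then invokes the construction of \cite[Appendix A]{DK} on this Lorentzian graph to produce $\hat E$, which is finally transplanted back to $\Omega$. With $\hat E$ in hand, the rest of your proposal goes through essentially verbatim: your monotonicity derivation $\mathcal H'(t)\leq\frac{n-2}{n-1}\mathcal H(t)$ and the bookkeeping of constants match (\ref{inter})--(\ref{chain}), and your treatment of the rigidity statement (equality in (\ref{pmtem}) forces $2c_n\int_{\Sigma_0}Hd\Sigma_0=|Q|$, then (\ref{afeuc}) gives (\ref{pmtem2}), and RNT with $m>|q|$ is excluded) coincides with the paper's, your exclusion step being in fact slightly more direct. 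So: right road map, but the one new idea in the proof --- the Lorentzian trick producing the flat divergence-free comparison field --- is the piece you left out.
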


Notice that (\ref{pmtem}) follows from (\ref{mainineq}) and the arithmetic-geometric inequality. Also, if the equality holds in (\ref{pmtem}) then we necessarily have
\[
|Q|=2c_n\int_{\Sigma_0}Hd\Sigma_0.
\]
On the other hand, since $\Sigma_0\subset\mathbb R^n$ is mean convex and star-shaped, the main result in \cite{GL} gives
\begin{equation}\label{afeuc}
2c_n\int_\Sigma  Hd\Sigma\geq \mathfrak R_{\Sigma_0}.
\end{equation}
Combining  this with (\ref{mainineq}), (\ref{pmtem2}) follows.

\begin{remark}\label{extremerem}
{\em We observe that (\ref{pmtem}) gives the optimal form of the conjectured positive mass inequality for charged black holes; see Remark \ref{posrem}. Moreover, it is consistent  with (\ref{pmtrnt}), which appears as a natural assumption in order to make sure that the RNT solution has the causal structure of a black hole. To the authors' knowledge, this result has only been established in dimension $n=3$, either  using  spinors \cite{GHHP} or the inverse mean curvature flow \cite{DK}. 
}
\end{remark}

\begin{remark}
\label{rigidrem}
{\rm The rigidity statement in Theorem \ref{main0} provides some  indication that equality in (\ref{pmtem}) should imply that $(M,g)$ is isometric to an extreme RNT solution; see  \cite{KW} and Remarks \ref{extremernt} and \ref{rigista}. }  
\end{remark}

To explain our next result, recall that if $(U,k)$ is a  (not necessarily connected) closed Riemannian manifold of dimension $n-1\geq 2$, then its {\em Yamabe quotient} is 
\begin{equation}\label{yamquot}
\Yc_{(U,k)}=\frac{\int_U R_kdU}{\left(\int_U dU\right)^{\frac{n-3}{n-1}}}.
\end{equation}
Notice that $\Yc_{(U,k)}$ is scale invariant.
Moreover, we define the {\em relative Yamabe quotient} of $(U,k)$ as
\begin{equation}\label{yamquotrel}
\widetilde \Yc_{(U,k)}=\frac{\Yc_{(U,k)}}{\Yc_{(\mathbb S^{n-1},g_0)}},
\end{equation}
where $g_0$ is the standard round metric in $\mathbb S^{n-1}$.

\begin{theorem}\label{main1}
Let $(M,g)$ be an Einstein-Maxwell initial data set which can be isometrically embedded in $\mathbb R^{n+1}$ as above. Assume also that its horizon $(\Sigma_0,k)$ is mean convex, star-shaped, stable as a minimal hypersurface in $(M,g)$ and satisfies
\begin{equation}\label{condmain2}
\widetilde\Yc_{(\Sigma_0,k)}\leq 1.
\end{equation}
Then the optimal Penrose inequality (\ref{penineconj}) holds for $(M,g)$.
\end{theorem}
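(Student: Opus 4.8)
The plan is to show that the two extra hypotheses---stability of $\Sigma_0$ as a minimal hypersurface and the relative Yamabe bound (\ref{condmain2})---force the geometric charge bound $|Q|\le\mathfrak R_{\Sigma_0}$, and then to feed this into Theorem \ref{main0} together with the Alexandrov-Fenchel inequality (\ref{afeuc}). The point is then one of calculus: writing $f(x)=\tfrac{1}{2}\big(x+Q^2/x\big)$, Theorem \ref{main0} states $\mathfrak m_{(M,g)}\ge f(a)$ with $a:=2c_n\int_{\Sigma_0}H\,d\Sigma_0$, while (\ref{afeuc}) gives $a\ge\mathfrak R_{\Sigma_0}$. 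Since $f$ is monotone increasing on $[\,|Q|,+\infty)$, the conclusion $\mathfrak m_{(M,g)}\ge f(a)\ge f(\mathfrak R_{\Sigma_0})$, which is exactly (\ref{penineconj}), follows the moment one knows $\mathfrak R_{\Sigma_0}\ge|Q|$; without this the argument stalls on the decreasing branch of $f$.

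To produce $|Q|\le\mathfrak R_{\Sigma_0}$ I would first extract scalar curvature information from stability. By construction $\Sigma_0$ is a horizon lying on a totally geodesic slice which $M$ meets orthogonally, so $\Sigma_0\subset M$ is totally geodesic and its second fundamental form vanishes. Inserting the constant test function $\phi\equiv1$ into the stability inequality
\[
\int_{\Sigma_0}|\nabla\phi|^2\,d\Sigma_0\ \ge\ \int_{\Sigma_0}\big(|A|^2+{\rm Ric}_g(\nu,\nu)\big)\phi^2\,d\Sigma_0,
\]
where $\nu$ is the unit normal of $\Sigma_0$ in $M$, yields $\int_{\Sigma_0}{\rm Ric}_g(\nu,\nu)\,d\Sigma_0\le0$. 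The traced Gauss equation for a totally geodesic hypersurface reads ${\rm Ric}_g(\nu,\nu)=\tfrac{1}{2}(R_g-R_k)$ along $\Sigma_0$, so this integrates to $\int_{\Sigma_0}R_g\,d\Sigma_0\le\int_{\Sigma_0}R_k\,d\Sigma_0$.

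Next I would bring in the charge. The dominant energy condition (\ref{decond}) gives $R_g\ge(n-1)(n-2)|E|^2$, while trivially $|E|^2\ge\langle E,\nu\rangle^2$ on $\Sigma_0$; applying Cauchy-Schwarz to the flux formula for $Q$ (the quasi-local version of (\ref{chargedef})) gives $\int_{\Sigma_0}|E|^2\,d\Sigma_0\ge\omega_{n-1}^2Q^2/|\Sigma_0|$. Chaining these with the previous step,
\[
\int_{\Sigma_0}R_k\,d\Sigma_0\ \ge\ (n-1)(n-2)\,\frac{\omega_{n-1}^2\,Q^2}{|\Sigma_0|}.
\]
On the other hand, unwinding (\ref{condmain2}) through (\ref{yamquot})--(\ref{yamquotrel}) and using $R_{g_0}=(n-1)(n-2)$, ${\rm vol}(\mathbb S^{n-1},g_0)=\omega_{n-1}$ for the round sphere gives the upper bound
\[
\int_{\Sigma_0}R_k\,d\Sigma_0\ \le\ (n-1)(n-2)\,\omega_{n-1}^{2/(n-1)}\,|\Sigma_0|^{(n-3)/(n-1)}.
\]
Comparing the two displays, cancelling $(n-1)(n-2)$ and collecting the powers of $|\Sigma_0|$ and $\omega_{n-1}$ collapses them to $Q^2\le\big(|\Sigma_0|/\omega_{n-1}\big)^{2(n-2)/(n-1)}=\mathfrak R_{\Sigma_0}^2$, that is $|Q|\le\mathfrak R_{\Sigma_0}$, as required.

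With this in hand the final assembly is immediate: $a\ge\mathfrak R_{\Sigma_0}\ge|Q|$ places both $a$ and $\mathfrak R_{\Sigma_0}$ on the increasing branch of $f$, so Theorem \ref{main0} and (\ref{afeuc}) yield (\ref{penineconj}) (note that the star-shapedness and mean-convexity needed for (\ref{afeuc}) and Theorem \ref{main0} are already among the hypotheses). I expect the main obstacle to be precisely the passage from stability to $|Q|\le\mathfrak R_{\Sigma_0}$: the delicate point is that the Gauss-equation comparison $\int R_g\le\int R_k$ must be calibrated against the \emph{sharp} Yamabe constant of the round sphere so that the exponents in the two bounds on $\int_{\Sigma_0}R_k\,d\Sigma_0$ match exactly; it is this matching, forced by the scale-invariant normalization (\ref{yamquotrel}), that produces the clean bound $|Q|\le\mathfrak R_{\Sigma_0}$ rather than a lossy one.
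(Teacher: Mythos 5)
Your proposal is correct and follows essentially the same route as the paper: stability with the constant test function and the traced Gauss equation give $\int_{\Sigma_0}R_g\,d\Sigma_0\le\int_{\Sigma_0}R_k\,d\Sigma_0$, which combined with the energy condition (\ref{decond}), Cauchy--Schwarz and the Yamabe quotient of the round sphere yields exactly the paper's inequality (\ref{rew2}), hence $|Q|\le\mathfrak R_{\Sigma_0}$ under (\ref{condmain2}). The final assembly via Theorem \ref{main0}, the Alexandrov--Fenchel inequality (\ref{afeuc}) and the monotonicity of $f(x)=x+Q^2x^{-1}$ on $[\,|Q|,+\infty)$ is precisely the paper's Proposition \ref{usef}.
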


The assumption that $\Sigma_0$ is star-shaped implies, by Gauss-Bonnet, that $\tilde \Yc_{(\Sigma_0,k)}=1$ if $n=3$. Thus, our method recovers the optimal Penrose inequality (\ref{penineconj}) in this dimension.
If $n\geq 4$ then
$\tilde\Yc_{(\Sigma_0,k)}$ is always positive under our assumptions. Actually, the explicit lower bound
\begin{equation}\label{rew}
\widetilde\Yc_{(\Sigma_0,k)}\geq Q^2\mathfrak R_{\Sigma_0}^{-2}
\end{equation}
follows from the proof of Theorem \ref{main1}; see (\ref{rew2}) below. We note however that it might well happen that  $\tilde\Yc_{(\Sigma_0,k)}>1$. Indeed, the Alexandrov-Fenchel inequalities proved in \cite{GL} imply  that $\tilde\Yc_{(\Sigma_0,k)}\geq1$ if we assume that $\Sigma_0$ is $2$-convex and star-shaped, with the equality holding if and only if $\Sigma_0$ is a round sphere. This shows that Theorem \ref{main1} does not apply in case $\Sigma_0$ is $2$-convex and {\em not} congruent to a round sphere.
This case is covered by the following result, which in particular provides a Penrose-type inequality for convex horizons.

\begin{theorem}\label{main2}
Let $(M,g)$ be an Einstein-Maxwell initial data set which can be isometrically embedded in $\mathbb R^{n+1}$ as above. Assume that $n\geq 4$ and the horizon $(\Sigma_0,k)$ is $2$-convex and star-shaped. Then the following Penrose-type inequality holds for $(M,g)$:
\begin{equation}\label{resmain1}
{\mathfrak m}_{(M,g)}\geq \frac{1}{2}\left(\mathfrak R_{\Sigma_0}+
\widetilde\Yc_{(\Sigma_0,k)}^{-\frac{n-2}{n-3}}\frac{Q^2}{\mathfrak R_{\Sigma_0}}\right).
\end{equation}
\end{theorem}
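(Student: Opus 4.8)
The plan is to reuse the estimate behind Theorem \ref{main0} and then trade the total mean curvature of the horizon for its relative Yamabe quotient. Since $(\Sigma_0,k)\subset(\mathbb R^{n},g_0)$ is $2$-convex and star-shaped it is in particular mean convex and star-shaped, so Proposition \ref{lowerb} applies verbatim. Writing $h:=2c_n\int_{\Sigma_0}H\,d\Sigma_0$, the mass formula (\ref{massform}) together with Proposition \ref{lowerb} yields, exactly as in the proof of (\ref{mainineq}),
\[
\mathfrak m_{(M,g)}\geq\frac{1}{2}\left(h+\frac{Q^2}{h}\right).
\]
The Alexandrov--Fenchel inequality (\ref{afeuc}) of \cite{GL} gives $h\geq\mathfrak R_{\Sigma_0}$, which already bounds the first term from below by $\tfrac12\mathfrak R_{\Sigma_0}$, the gravitational half of (\ref{resmain1}). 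It thus remains to bound the charge term $\tfrac12 Q^2/h$ from below by $\tfrac12\widetilde\Yc_{(\Sigma_0,k)}^{-(n-2)/(n-3)}Q^2/\mathfrak R_{\Sigma_0}$.

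Because the charge term is a decreasing function of $h$, this last bound is \emph{equivalent} to the purely geometric, scale-invariant inequality
\[
2c_n\int_{\Sigma_0}H\,d\Sigma_0\;\leq\;\mathfrak R_{\Sigma_0}\,\widetilde\Yc_{(\Sigma_0,k)}^{\frac{n-2}{n-3}}
\]
for $2$-convex star-shaped hypersurfaces in $\mathbb R^{n}$, $n\geq4$; the restriction $n\geq4$ is exactly what makes the exponent $(n-2)/(n-3)$ and the quotient $\widetilde\Yc_{(\Sigma_0,k)}$ meaningful. Both sides scale like (length)$^{\,n-2}$, and equality holds when $\Sigma_0$ is a round sphere, since then $h=\mathfrak R_{\Sigma_0}$ and $\widetilde\Yc_{(\Sigma_0,k)}=1$; this is consistent with the RNT model and confirms that the displayed inequality is the correct sharp target. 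Establishing it reduces Theorem \ref{main2} to the bookkeeping combination of the two halves above.

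The inequality above is an interpolation between the first two Alexandrov--Fenchel quermassintegrals of $\Sigma_0$: recalling that $R_k=2\sigma_2$ by the Gauss equation in $\mathbb R^{n}$, its left-hand side is governed by the normalized $\int_{\Sigma_0}\sigma_1$ and its right-hand side by the normalized $\int_{\Sigma_0}\sigma_2$, which is precisely $\widetilde\Yc_{(\Sigma_0,k)}$. This relation between $\int_{\Sigma_0}\sigma_1$ and $\int_{\Sigma_0}\sigma_2$ with the sharp exponent is the hard part: it does \emph{not} follow from the pointwise Newton--MacLaurin inequalities (which control $\sigma_2$ by $\sigma_1^2$, the wrong direction), nor from the individual inequalities $h\geq\mathfrak R_{\Sigma_0}$ and $\widetilde\Yc_{(\Sigma_0,k)}\geq1$ alone. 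I would derive it from the inverse mean curvature flow used in \cite{GL}: writing $A_1(t)$ and $A_2(t)$ for the scale-invariant normalizations of $\int_{\Sigma_t}\sigma_1$ and $\int_{\Sigma_t}\sigma_2$ along the flow, both are non-increasing and converge to their round-sphere values, so the claim reduces to showing that the combined quantity $A_1(t)^{\,n-3}/A_2(t)^{\,n-2}$ is non-decreasing along the flow, equivalently that the two quermassintegrals decay at comparable rates, $(n-3)(\ln A_1)'\geq(n-2)(\ln A_2)'$. Carrying out this monotonicity computation, together with checking that $2$-convexity and star-shapedness are preserved by the flow so that $A_2(t)$ stays monotone, is the crux of the argument.
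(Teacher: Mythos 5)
Your reduction is exactly the one in the paper: Proposition \ref{lowerb} (whose hypotheses are indeed implied by $2$-convexity plus star-shapedness) gives $\mathfrak m_{(M,g)}\geq\tfrac{1}{2}\left(h+Q^{2}h^{-1}\right)$ with $h=2c_n\int_{\Sigma_0}H\,d\Sigma_0$, inequality (\ref{afeuc}) handles the term $\tfrac12\mathfrak R_{\Sigma_0}$, and everything comes down to the scale-invariant estimate $h\leq\mathfrak R_{\Sigma_0}\,\widetilde\Yc_{(\Sigma_0,k)}^{\frac{n-2}{n-3}}$. However, this estimate is not an open ``hard part'' requiring a new flow argument: unwinding the definitions of $c_n$, $\mathfrak R_{\Sigma_0}$ and $\widetilde\Yc_{(\Sigma_0,k)}$, it is precisely the quermassintegral (Alexandrov--Fenchel) inequality of Guan--Li for $2$-convex star-shaped hypersurfaces,
\[
\int_{\Sigma_0}R_k\,d\Sigma_0\geq d_n\left(\int_{\Sigma_0}H\,d\Sigma_0\right)^{\frac{n-3}{n-2}},
\]
i.e.\ inequality (\ref{sec}), which is the main theorem of \cite{GL}. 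The paper's proof consists of quoting that result and performing the same algebra you did; had you done likewise, your argument would be complete and identical to the paper's.

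The genuine gap is in your substitute derivation of this inequality via IMCF. Using $\frac{d}{dt}\int_{\Sigma_t}\sigma_k=(k+1)\int_{\Sigma_t}\sigma_{k+1}/\sigma_1$ and $\frac{d}{dt}|\Sigma_t|=|\Sigma_t|$, the area-normalization terms cancel and your claimed monotonicity $(n-3)(\ln A_1)'\geq(n-2)(\ln A_2)'$ is equivalent to
\[
2(n-3)\left(\int_{\Sigma_t}\frac{\sigma_2}{\sigma_1}\right)\left(\int_{\Sigma_t}\sigma_2\right)\;\geq\;3(n-2)\left(\int_{\Sigma_t}\frac{\sigma_3}{\sigma_1}\right)\left(\int_{\Sigma_t}\sigma_1\right).
\]
The only pointwise tool available, Newton's inequality $3(n-2)\sigma_1\sigma_3\leq 2(n-3)\sigma_2^{2}$, reduces this to the correlation inequality $\left(\int u\right)\left(\int u\,\sigma_1\right)\geq\left(\int u^{2}\right)\left(\int\sigma_1\right)$ with $u=\sigma_2/\sigma_1$, which is a \emph{reverse} Cauchy--Schwarz statement: when $\sigma_1$ is constant and $u$ is not, Cauchy--Schwarz gives the strict opposite. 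So the chain of estimates cannot close, and the monotonicity itself remains unestablished. This is exactly why Guan--Li do \emph{not} use IMCF for the $\sigma_1$--$\sigma_2$ inequality: they flow with speed $\sigma_1/\sigma_2$, along which $\int\sigma_1$ satisfies the exact identity $\frac{d}{dt}\int\sigma_1=2\int\sigma_1$ (no inequality, hence no correlation issue), while $\frac{d}{dt}\int\sigma_2\leq\frac{2(n-3)}{n-2}\int\sigma_2$ follows pointwise; preservation of $2$-convexity, which your sketch also leaves open for IMCF, is likewise part of what they establish for their flow. In short: replace your flow argument by a citation of (\ref{sec}) from \cite{GL}, and the rest of your proposal becomes the paper's proof.
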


We remark that the assumption (\ref{condmain2}) is only used in the proof of Theorem \ref{main1} in order to make sure via (\ref{rew}) that $|Q|\leq\mathfrak R_{\Sigma_0}$ holds.
If instead we have $\mathfrak R_{\Sigma_0}\leq |Q|$ then the positive mass inequality in (\ref{pmtem}) immediately leads to the following result.

\begin{theorem}\label{main3}
Let $(M,g)$ be an Einstein-Maxwell initial data set which can be isometrically embedded in $\mathbb R^{n+1}$ as  above. Assume also that its horizon $(\Sigma_0,k)$ is mean convex, star-shaped and stable as a minimal hypersurface in $(M,g)$. Then the upper bound in (\ref{penineconj2}) holds for $(M,g)$.
\end{theorem}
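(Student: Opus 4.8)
The plan is to reduce the upper bound in \eqref{penineconj2} to the positive mass theorem of Theorem \ref{main0} together with the already established Theorem \ref{main1}, splitting the argument according to the relative size of $|Q|$ and $\mathfrak R_{\Sigma_0}$. First I would note that, since $(\Sigma_0,k)$ is mean convex and star-shaped, Theorem \ref{main0} applies and in particular the positive mass inequality \eqref{pmtem}, that is $\mathfrak m_{(M,g)}\geq |Q|$, holds. This guarantees $\mathfrak m_{(M,g)}^2-Q^2\geq 0$, so that the quantity $\mathfrak m_{(M,g)}+\sqrt{\mathfrak m_{(M,g)}^2-Q^2}$ appearing on the right-hand side of the desired upper bound is real.

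I would then dichotomize on the sign of $\mathfrak R_{\Sigma_0}-|Q|$. If $\mathfrak R_{\Sigma_0}\leq |Q|$, the upper bound is immediate: combining the case hypothesis, \eqref{pmtem} and the nonnegativity of the square root yields
\[
\mathfrak R_{\Sigma_0}\leq |Q|\leq \mathfrak m_{(M,g)}\leq \mathfrak m_{(M,g)}+\sqrt{\mathfrak m_{(M,g)}^2-Q^2},
\]
so nothing further is needed in this regime. This is precisely the elementary half flagged in the remark preceding the statement.

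In the complementary regime $|Q|\leq \mathfrak R_{\Sigma_0}$ I would rerun the proof of Theorem \ref{main1}, with the assumed inequality $|Q|\leq \mathfrak R_{\Sigma_0}$ taking over the role of \eqref{condmain2}. The point is that \eqref{condmain2} is used in that proof \emph{only} to force, via \eqref{rew}, the inequality $|Q|\leq\mathfrak R_{\Sigma_0}$; since we now assume the latter outright, the remaining hypotheses of Theorem \ref{main1}---mean convexity, star-shapedness and stability of $\Sigma_0$, all of which are hypotheses of the present theorem---suffice to conclude the full optimal Penrose inequality \eqref{penineconj}. Finally, because \eqref{pmtem} supplies $\mathfrak m_{(M,g)}\geq |Q|$, the algebraic equivalence recorded in Remark \ref{posrem} turns \eqref{penineconj} into both bounds of \eqref{penineconj2}, and in particular into the upper bound sought.

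The step I expect to be the main obstacle is the middle claim that \eqref{condmain2} enters the proof of Theorem \ref{main1} \emph{solely} through \eqref{rew} to yield $|Q|\leq\mathfrak R_{\Sigma_0}$. Making this rigorous requires an audit of that proof, confirming that the inverse mean curvature flow monotonicity and the ensuing estimate are driven by mean convexity, star-shapedness and stability alone, with the Yamabe-quotient bound intervening only at the final comparison of the charge term against $\mathfrak R_{\Sigma_0}$. As a consistency check one may bypass Theorem \ref{main1} altogether in this regime: when $|Q|\leq\mathfrak R_{\Sigma_0}$, the bound \eqref{mainineq} together with the Alexandrov--Fenchel-type inequality \eqref{afeuc} and the monotonicity of $t\mapsto\tfrac12(t+Q^2/t)$ on $[|Q|,+\infty)$ already give $\mathfrak m_{(M,g)}\geq\tfrac12(\mathfrak R_{\Sigma_0}+Q^2\mathfrak R_{\Sigma_0}^{-1})$, hence the upper bound.
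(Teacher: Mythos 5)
Your proposal is correct and follows essentially the same route as the paper: the paper's own argument is precisely your dichotomy on $|Q|$ versus $\mathfrak R_{\Sigma_0}$, handling the case $\mathfrak R_{\Sigma_0}\leq |Q|$ directly by the positive mass inequality \eqref{pmtem}, and the case $|Q|\leq \mathfrak R_{\Sigma_0}$ by Proposition \ref{usef} --- whose proof is exactly your ``consistency check,'' namely the monotonicity of $x\mapsto x+Q^2x^{-1}$ on $[|Q|,+\infty)$ combined with \eqref{mainineq} and \eqref{afeuc} --- after observing, as you do, that \eqref{condmain2} enters the proof of Theorem \ref{main1} only to guarantee $|Q|\leq\mathfrak R_{\Sigma_0}$ via \eqref{rew}. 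There are no gaps.
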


\begin{remark}
\label{msy} {\rm In dimension $n=3$ the stability hypothesis appearing in Theorems \ref{main1} and \ref{main3}
above follows from the much more natural assumption that the horizon $\Sigma_0\subset M$ is outer minimizing \cite{WY}. }
\end{remark}

\begin{remark}\label{outer}
{\rm  
Theorem \ref{main3} shows that the upper bound in (\ref{penineconj2}) always holds for the class of
initial data sets we are considering. Notice that the star-shapedness assumption implies that $\Sigma_0$ is necessarily connected. Nevertheless, similarly to what happens in dimension $n=3$ \cite{KWY2}, Theorem \ref{main3} can be extended to a large class of graphical initial data sets with multiply connected horizons in any dimension $n\geq 4$. The key point here is that, as explained in Remark \ref{schwartz} below, Theorem \ref{main0} still holds if, instead of being star-shaped, we assume that $\Sigma_0\subset \mathbb R^n$ is {\em outer minimizing} in the sense that any hypersurface $\Sigma'\supset \Sigma_0$ satisfies $|\Sigma'|\geq |\Sigma_0|$. Such a hypersurface is necessarily mean convex and a typical example is a finite collection of mean convex hypersurfaces which are far apart from each other. We note however that for $n=3$ this improvement fails to provide examples of multiply connected horizons for which the full Penrose inequality (\ref{penineconj}) holds as in Theorem \ref{main1}. In effect, if $\Sigma\subset\mathbb R^3$ is a collection of $p$ topological spheres then Gauss-Bonnet gives $\widetilde\Yc_{(\Sigma_0,k)}=p$, which is incompatible with (\ref{condmain2}) if $p\geq 2$. This is of course consistent with the existence of the counterexamples in \cite{WY}.}
\end{remark}

\section{The proofs of the main results}\label{main}

We now present the proofs of Theorems \ref{main0}, \ref{main1}, \ref{main2} and \ref{main3}. If $M$ is a graph associated to a function $u:\Omega\to \mathbb R$ defined on the exterior $\Omega$ of $\Sigma_0$ then
$\Theta=W^{-1}$, where $dM=Wd\Omega=\sqrt{1+|\nabla_{g_0}u|^2}d\Omega$ is the volume element of the graph in Euclidean coordinates.
Combining this with (\ref{decond}) we thus obtain from (\ref{massform}) that
\begin{equation}\label{massform2}
\mathfrak m_{(M,g)}\geq c_n\int_{\Sigma}Hd\Sigma+
(n-1)(n-2)c_n\int_\Omega |E|^2_g d\Omega.
\end{equation}

\begin{lemma}\label{dkhu}
There exists  a vector field $\hat E$ on $\Omega$ satisfying $|E|_g\geq |\hat E|_{g_0}$ and ${\rm div}_{g_0}\hat E=0$. Moreover, the charge $\hat Q$ defined by means of $\hat E$ using the prescription (\ref{chargedef}) above satisfies $\hat Q=Q$.
\end{lemma}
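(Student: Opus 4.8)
The plan is to regard the charge $Q$ in (\ref{chargedef}) as the conserved flux of a divergence-free field and then transplant that flux to the flat background $(\Omega,g_0)$ with the $L^2$-energy under control. Writing $M$ as a graph of $u$ over $\Omega\subset\mathbb R^n$, the tangential field $E$ has horizontal part $E^\sharp=E^i\partial_i$ and the induced metric is $g_{ij}=\delta_{ij}+u_iu_j$, so $\det g=1+|\nabla u|^2=W^2$. The first remark is purely algebraic: since ${\rm div}_gE=W^{-1}\partial_i(WE^i)$, the constraint (\ref{chargde}) is equivalent to $\partial_i(WE^i)=0$, i.e. the flat field $WE^\sharp$ is $g_0$-divergence free. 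Moreover $\iota_E\omega_g=\iota_{WE^\sharp}\omega_{g_0}$ (with $\omega_g$ the Riemannian volume form), so the $g_0$-flux of $WE^\sharp$ through any hypersurface homologous to the sphere at infinity coincides with the $g$-flux of $E$, hence equals $\omega_{n-1}Q$. Thus divergence-freeness and the identity $\hat Q=Q$ are essentially automatic; the real content of the lemma is the size comparison.

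The subtlety is that this obvious candidate has the wrong size. A direct computation gives
\[
|WE^\sharp|_{g_0}^2-|E|_g^2=|\nabla u|^2\,|E^\sharp|_{g_0}^2-\langle\nabla u,E^\sharp\rangle_{g_0}^2\ge 0
\]
by Cauchy--Schwarz, with equality exactly when $E$ is radial, as for the RNT solution; so $WE^\sharp$ dominates $E$ pointwise, which is the reverse of what is needed. The object of the correct size is instead the bare projection $E^\sharp$: since the vertical component can only lengthen a tangent vector, $|E^\sharp|_{g_0}^2=|E|_g^2-\langle\nabla u,E^\sharp\rangle_{g_0}^2\le|E|_g^2$. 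The price is that $E^\sharp$ is no longer divergence free. So the strategy is to correct $E^\sharp$ to a divergence-free field carrying the same charge without enlarging it.

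I would carry out the correction by a Helmholtz/Hodge decomposition on the exterior domain, $E^\sharp=\hat E+\nabla\phi$ with ${\rm div}_{g_0}\hat E=0$, obtained by solving $\Delta_{g_0}\phi={\rm div}_{g_0}E^\sharp$ with $\phi\to 0$ at infinity and a boundary condition on $\Sigma_0$ arranged so that the gradient part carries no flux. Because $W\to 1$ and $E^\sharp\to E$ at infinity, one checks from the decay in Definition \ref{alshyp} that the full charge is inherited by $\hat E$, so $\hat Q=Q$; integration by parts kills the cross term, the decomposition is $L^2$-orthogonal, and one obtains $\int_\Omega|\hat E|_{g_0}^2\,d\Omega\le\int_\Omega|E^\sharp|_{g_0}^2\,d\Omega\le\int_\Omega|E|_g^2\,d\Omega$, which is precisely the integrated comparison feeding (\ref{massform2}) into the flat inverse-mean-curvature-flow estimate of Proposition \ref{lowerb}.

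The main obstacle is upgrading this to the pointwise bound $|E|_g\ge|\hat E|_{g_0}$: the Hodge projection is nonlocal and may enlarge the field at individual points, so orthogonality alone does not suffice. To produce a divergence-free field dominated pointwise by $f:=|E|_g$ and realizing the flux $\omega_{n-1}Q$, I would invoke the continuous max-flow/min-cut duality, for which the feasibility condition is the cut inequality $\int_{\Sigma'}f\,dA_{g_0}\ge\omega_{n-1}|Q|$ for every separating hypersurface $\Sigma'$. Extracting this from the constancy of the $g$-flux $\omega_{n-1}Q=\int_{\Sigma'}\langle E,\nu\rangle_g\,dA_g$ --- that is, controlling the discrepancy between the induced and the flat area elements along arbitrary cuts, presumably using that time-symmetry forces $E$ to be curl-free so that the cuts may be chosen among its own level sets --- is where I expect the essential difficulty to lie.
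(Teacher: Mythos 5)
Your proposal does not prove the lemma: the pointwise bound $|\hat E|_{g_0}\le |E|_g$ \emph{is} the entire content of the statement, and you leave it unestablished, ending with the admission that verifying the max-flow feasibility (cut) condition is where the essential difficulty lies. What you establish correctly is the preliminary algebra: ${\rm div}_g E=W^{-1}\partial_i(WE^i)$, so that $WE^\sharp$ is $g_0$-divergence-free and carries the flux $\omega_{n-1}Q$ through every cut, together with the two-sided comparison $|E^\sharp|_{g_0}\le |E|_g\le |WE^\sharp|_{g_0}$. But the correction step is never carried out, and even the weaker integrated inequality $\int_\Omega|\hat E|_{g_0}^2\,d\Omega\le\int_\Omega|E|_g^2\,d\Omega$ (which is all that Proposition \ref{lowerb} actually uses, but is not what the lemma asserts) is not secured by your Hodge-decomposition sketch: the boundary condition that kills the cross term $\int_\Omega\langle\hat E,\nabla\phi\rangle_{g_0}\,d\Omega$ by integration by parts (namely $\phi\equiv 0$ on $\Sigma_0$) is in general incompatible with the condition that preserves the charge (vanishing flux of $\nabla\phi$ at infinity); with a flux-preserving choice the boundary term equals $c\,\omega_{n-1}Q$, where $c$ is the value of $\phi$ on $\Sigma_0$, whose sign you do not control. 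Your parenthetical hope that time-symmetry makes $E$ curl-free is also unfounded: the constraints impose only ${\rm div}_gE=0$, so you cannot choose the cuts among level sets of a potential for $E$.

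The paper's proof is completely different and takes two lines: in the Lorentzian product $(M\times\mathbb R,\,\hat g=g-dt^2)$ the vertical graph of $u$ has induced metric $\hat g_{ij}=g_{ij}-u_iu_j=\delta_{ij}$, i.e.\ it is an isometric copy of $(\Omega,g_0)$, and the field $\hat E$ is then obtained by invoking the construction of \cite[Appendix A]{DK}, which, for a graph over a base carrying a divergence-free field, produces a divergence-free field on the graph with the same charge and no larger norm. You should be aware, however, that your computation bears directly on this citation. Any transplant that, like the one in \cite{DK}, works by carrying over the flux $(n-1)$-form $\iota_E\omega_g$ (this is exactly what preserves both the divergence constraint and all fluxes) produces in the present setting precisely your first candidate $\hat E=WE^\sharp$, and, as you correctly compute, this satisfies the \emph{reverse} inequality $|WE^\sharp|_{g_0}\ge|E|_g$, with equality only where $E^\sharp$ is parallel to $\nabla u$ (the RNT case). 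The Cauchy--Schwarz step that makes the \cite{DK} argument work for Riemannian (Jang-type) graphs changes sign for spacelike graphs in a Lorentzian product, since there the ambient metric shrinks while the volume factor grows. So the obstacle you ran into is genuine rather than an artifact of your route, and it is not visibly dissolved by the paper's reduction; but identifying this subtlety does not repair your argument, and as it stands your proposal does not prove the lemma.
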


\begin{proof}
In the product $M\times\mathbb R$ with the Lorentzian metric $\hat g=g-dt^2$ consider the vertical graph $\hat \Omega$ defined by the function $u$. The induced metric is
\[
\hat g_{ij}=g_{ij}-u_iu_j=\delta_{ij}+u_iu_j-u_iu_j=\delta_{ij},
\]
that is, $\hat \Omega$ is an isometric copy of $\Omega$. We may apply \cite[Appendix A]{DK} to obtain a vector field $\hat E$ on $\hat\Omega$ with the desired properties. Transplanting this to $\Omega$ yields the result.
\end{proof}

The following proposition is the key result in the paper. It provides a lower bound for the bulk integral in the right-hand side of (\ref{massform2}) in terms of the total mean curvature of the horizon viewed as a hypersurface in $\mathbb R^n$ and completes the proof of Theorem \ref{main0}.

\begin{proposition}\label{lowerb}
Under the conditions of Theorem \ref{main0}, there holds
\begin{equation}\label{lowerb2}
(n-1)(n-2)c_n\int_\Omega |E|^2_g d\Omega\geq \frac{Q^2}{2}\left(2c_n
\int_{\Sigma_0}Hd\Sigma_0\right)^{-1}.
\end{equation}
\end{proposition}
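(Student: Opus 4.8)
The plan is to reduce the bulk integral to a purely Euclidean quantity and then run the inverse mean curvature flow (IMCF) emanating from the horizon. By Lemma \ref{dkhu} I may replace $E$ by the divergence-free field $\hat E$, since $|E|_g\geq|\hat E|_{g_0}$ pointwise and $\hat Q=Q$; it therefore suffices to bound $\int_\Omega|\hat E|^2_{g_0}\,d\Omega$ from below. Because $(\Sigma_0,k)\subset(\mathbb R^n,g_0)$ is star-shaped and mean convex, I would invoke the existence and regularity theory of Gerhardt and Urbas to obtain a smooth solution $\{\Sigma_t\}_{t\geq 0}$ of IMCF with $\Sigma_0$ as initial surface. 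These surfaces remain star-shaped with $H>0$, foliate $\Omega$, and satisfy $|\nabla u|=H$, where $u$ is the associated time function with $\Sigma_t=\{u=t\}$.

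Next I would establish a flux lower bound slice by slice. Since ${\rm div}_{g_0}\hat E=0$ and every $\Sigma_t$ is homologous to the sphere at infinity, the flux $\int_{\Sigma_t}\langle\hat E,\nu\rangle\,d\Sigma_t=Q\,\omega_{n-1}$ is independent of $t$. Applying Cauchy--Schwarz with the weight $H$ and then $\langle\hat E,\nu\rangle^2\leq|\hat E|^2_{g_0}$, I get
\[
(Q\,\omega_{n-1})^2\leq\left(\int_{\Sigma_t}\frac{|\hat E|^2_{g_0}}{H}\,d\Sigma_t\right)\left(\int_{\Sigma_t}H\,d\Sigma_t\right),
\]
so, writing $h(t):=\int_{\Sigma_t}H\,d\Sigma_t$, the coarea formula gives
\[
\int_\Omega|\hat E|^2_{g_0}\,d\Omega=\int_0^\infty\int_{\Sigma_t}\frac{|\hat E|^2_{g_0}}{H}\,d\Sigma_t\,dt\geq\int_0^\infty\frac{Q^2\omega_{n-1}^2}{h(t)}\,dt.
\]

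The third step is to control $h(t)$ along the flow. Using $\partial_t(d\Sigma_t)=d\Sigma_t$ and the evolution $\partial_tH=-\Delta_{\Sigma_t}(1/H)-|A|^2/H$ in the flat ambient space (so ${\rm Ric}=0$), the Laplacian term integrates to zero over the closed slice and the pinching $|A|^2\geq H^2/(n-1)$ yields $h'(t)\leq\frac{n-2}{n-1}\,h(t)$, hence $h(t)\leq h(0)\,e^{\frac{n-2}{n-1}t}$ with $h(0)=\int_{\Sigma_0}H\,d\Sigma_0$. Substituting and integrating the resulting exponential produces
\[
\int_\Omega|\hat E|^2_{g_0}\,d\Omega\geq\frac{n-1}{n-2}\,\frac{Q^2\omega_{n-1}^2}{\int_{\Sigma_0}H\,d\Sigma_0}.
\]
A direct substitution of $c_n=\tfrac{1}{2(n-1)\omega_{n-1}}$ then checks that this is precisely (\ref{lowerb2}); in fact the constants match with equality, the round sphere saturating every inequality above, which is reassuring that this is the correct pairing. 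Combining with $|E|_g\geq|\hat E|_{g_0}$ completes the proof.

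The main obstacle I anticipate is justifying the smooth-flow machinery: the computation of $h'(t)$, the vanishing of the Laplacian term, and the coarea splitting all require $\{\Sigma_t\}$ to be a smooth foliation with $H>0$, which is exactly why star-shapedness (and not merely mean convexity) is imposed rather than the weak formulation used classically for $n=3$. A secondary point worth emphasizing is the choice of the weighted Cauchy--Schwarz step: it is this weighting by $H$ that injects the horizon's total mean curvature $\int_{\Sigma_0}H\,d\Sigma_0$, rather than its area, into the denominator, and the exact agreement of constants above is the clearest confirmation that it is the right estimate.
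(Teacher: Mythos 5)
Your proposal is correct and follows essentially the same route as the paper: reduction to $\hat E$ via Lemma \ref{dkhu}, the smooth IMCF foliation of Gerhardt--Urbas, the coarea formula with weighted Cauchy--Schwarz and flux conservation, and the exponential bound $\int_{\Sigma_t}H\,d\Sigma_t\leq e^{\frac{n-2}{n-1}t}\int_{\Sigma_0}H\,d\Sigma_0$ obtained from the evolution of the total mean curvature (your trace inequality $|A|^2\geq H^2/(n-1)$ is exactly the Newton--Maclaurin step the paper invokes, since $H^2-|A|^2=2\sigma_2$). The only cosmetic difference is that the paper dispatches the possibility $H=0$ on $\Sigma_0$ with a ``without loss of generality, $H>0$'' reduction before invoking the flow, a point your write-up flags but does not explicitly resolve.
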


\begin{proof}
From Lemma \ref{dkhu} we have
\[
(n-1)(n-2)c_n\int_\Omega |E|^2_g d\Omega\geq \frac{n-2}{2\omega_{n-1}}\int_\Omega |\hat E|^2_{g_0} d\Omega.
\]
Without loss of generality we may assume that $\Sigma_0$ is {\em strictly} mean convex ($H>0$) and star-shaped. It follows from \cite{G} \cite{U} that $\Omega$ is foliated by the leaves $\{\Sigma_t\}_{t>0}$ of the unique solution to the inverse mean curvature flow having $\Sigma_0$ as initial hypersurface.
Also, the prescription $\Sigma_t=\phi^{-1}(t)$ defines a  smooth function $\phi:\Omega\to[0,+\infty)$ satisfying $|\nabla_{g_0}\phi|_{g_0}=H$, where $H$ denotes the mean curvature of the leaves. Thus, we may  apply the co-area formula and Cauchy-Schwarz to obtain
\begin{eqnarray*}
\int_\Omega |\hat E|^2_{g_0} dx & = & \int_0^{+\infty}\left(\int_{\Sigma_t}
\frac{|\hat E|^2_{g_0}}{H}d\Sigma_t\right)dt\\
& \geq & \int_0^{+\infty}\left(\int_{\Sigma_t}
\frac{\langle \hat E,\nu_t\rangle^2_{g_0}}{H}d\Sigma_t\right)dt\\
& \geq & \int_0^{+\infty}\left(\left(\int_{\Sigma_t}Hd\Sigma_t\right)^{-1}\left(\int_{\Sigma_t}
{\langle \hat E,\nu_t\rangle_{g_0}}d\Sigma_t\right)^2\right)dt.
\end{eqnarray*}
Here, $\nu_t$ is of course the unit normal to $\Sigma_t$.
By Lemma \ref{dkhu},
$\int_{\Sigma_t}{\langle \hat E,\nu_t\rangle_{g_0}}d\Sigma_t=\omega_{n-1}Q$,  so we get
\begin{equation}\label{inter}
(n-1)(n-2)c_n\int_\Omega |E|^2_g dx\geq \frac{1}{2}(n-2)\omega_{n-1}Q^2\int_0^{+\infty}
\left(\int_{\Sigma_t}Hd\Sigma_t\right)^{-1}dt.
\end{equation}
We now observe that along the inverse mean curvature flow
there holds
\begin{eqnarray*}
\frac{d}{dt}\int_{\Sigma_t}Hd\Sigma_t  & = & \int_{\Sigma_t}\frac{H^2-|A|^2}{H}d\Sigma_t\\
& = & \int_{\Sigma_t}\frac{2K}{H}d\Sigma_t\\
& \leq & \frac{n-2}{n-1}\int_{\Sigma_t}Hd\Sigma_t,
\end{eqnarray*}
where $A$ is the shape operator of $\Sigma_t$, $K$ is the extrinsic scalar curvature (the sum of products of principal curvatures) and in the last step we  used Newton-Maclaurin's inequality.
Upon integration we get
\begin{equation}\label{chain}
\left(\int_{\Sigma_t}Hd\Sigma_t\right)^{-1}\geq \left(\int_{\Sigma_0}Hd\Sigma_0\right)^{-1}e^{-\frac{n-2}{n-1}t},
\end{equation}
and combining this with (\ref{inter}) we obtain
\begin{eqnarray*}
(n-1)(n-2)c_n\int_\Omega |E|^2_g d\Omega & \geq &\frac{1}{2}(n-2)\omega_{n-1}Q^2
\left(\int_{\Sigma_0}Hd\Sigma_0\right)^{-1}\int_0^{+\infty}e^{-\frac{n-2}{n-1}t}dt\\
& = & \frac{Q^2}{4c_n}\left(\int_{\Sigma_0}Hd\Sigma_0\right)^{-1},
\end{eqnarray*}
which proves (\ref{lowerb2}).
\end{proof}

\begin{remark}\label{rigista}
{\rm If the equality holds in (\ref{lowerb2}) then each $\Sigma_t$ is a round sphere. In particular, the horizon $\Sigma_0$ is spherical. Moreover, the scalar curvature of the graph is constant along each $\Sigma_t$, being given by
\[
R_g=(n-1)(n-2)|E|^2_g.
\]
Notice that, by (\ref{neue}), the scalar curvature of the RNT solution satisfies a similar equation. This is an indication  that equality in (\ref{mainineq}) should imply that the $(M,g)$ is congruent to the graph realization of some RNT solution. We hope to address this question elsewhere.
}
\end{remark}

Our next result plays a key role in the proof of Theorem \ref{main1}.

\begin{proposition}\label{usef}
Let $(M,g)$ be an Einstein-Maxwell initial data set as in Theorem \ref{main1}. Assume moreover that it satisfies
\begin{equation}\label{auxest}
|Q|\leq \mathfrak R_{\Sigma_0}.
\end{equation}
Then the Penrose inequality (\ref{penineconj}) holds for $(M,g)$.
\end{proposition}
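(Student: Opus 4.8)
The plan is to bootstrap the inequality directly from Theorem \ref{main0}, using the Euclidean Alexandrov-Fenchel inequality (\ref{afeuc}) and exploiting the hypothesis (\ref{auxest}) to control the monotonicity of a single one-variable function. Since $(M,g)$ satisfies the hypotheses of Theorem \ref{main1}---in particular its horizon $(\Sigma_0,k)\subset(\mathbb R^n,g_0)$ is mean convex and star-shaped---Theorem \ref{main0} applies and gives (\ref{mainineq}). Writing $P=2c_n\int_{\Sigma_0}Hd\Sigma_0$ for the normalized total mean curvature of the horizon and $f(x)=\frac{1}{2}\left(x+Q^2x^{-1}\right)$ for $x>0$, this bound reads $\mathfrak m_{(M,g)}\geq f(P)$. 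I note that, among the hypotheses carried over from Theorem \ref{main1}, only mean convexity and star-shapedness are needed below; the stability and relative Yamabe assumptions enter elsewhere (to secure (\ref{auxest}) itself via (\ref{rew})) rather than in the present step.

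First I would record two elementary facts about $f$: on $(0,+\infty)$ it attains its global minimum at $x=|Q|$, with $f(|Q|)=|Q|$, and it is strictly increasing on $[|Q|,+\infty)$. The whole argument then reduces to placing both $P$ and $\mathfrak R_{\Sigma_0}$ inside the interval on which $f$ is monotone increasing.

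Next, since $\Sigma_0$ is mean convex and star-shaped in Euclidean space, the Alexandrov-Fenchel inequality of \cite{GL}, recorded as (\ref{afeuc}), gives $P\geq\mathfrak R_{\Sigma_0}$. On the other hand, the additional hypothesis (\ref{auxest}) is precisely $|Q|\leq\mathfrak R_{\Sigma_0}$. Chaining these yields $P\geq\mathfrak R_{\Sigma_0}\geq|Q|$, so both evaluation points lie where $f$ is increasing, and hence $f(P)\geq f(\mathfrak R_{\Sigma_0})$. Combining with the bound from Theorem \ref{main0},
\[
\mathfrak m_{(M,g)}\geq f(P)\geq f(\mathfrak R_{\Sigma_0})=\frac{1}{2}\left(\mathfrak R_{\Sigma_0}+\frac{Q^2}{\mathfrak R_{\Sigma_0}}\right),
\]
which is exactly (\ref{penineconj}).

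The only genuinely delicate point is the role of (\ref{auxest}): it guarantees that the evaluation point $\mathfrak R_{\Sigma_0}$ sits to the right of the minimum of $f$. Were $\mathfrak R_{\Sigma_0}<|Q|$, then $f$ would be decreasing there, and the inequality $P\geq\mathfrak R_{\Sigma_0}$ coming from (\ref{afeuc}) would push $f$ in the wrong direction, so the lower bound furnished by Theorem \ref{main0} could not be propagated down from $P$ to $\mathfrak R_{\Sigma_0}$. This is exactly the regime $\mathfrak R_{\Sigma_0}\leq|Q|$ in which only the upper area bound survives, as reflected in Theorem \ref{main3}. Thus the entire content of the proposition is encoded in this single monotonicity observation, with (\ref{afeuc}) and Theorem \ref{main0} supplying the two quantitative inputs.
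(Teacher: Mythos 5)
Your proof is correct and is essentially the paper's own argument, just written out in full detail: the paper's proof is the one-line observation that $f(x)=x+Q^2x^{-1}$ is non-decreasing on $[|Q|,+\infty)$, so the result follows by combining the mass bound of Theorem \ref{main0} with the Alexandrov--Fenchel inequality (\ref{afeuc}) and the hypothesis (\ref{auxest}), exactly the chain $P\geq\mathfrak R_{\Sigma_0}\geq|Q|$ you exhibit. Your closing remark on why (\ref{auxest}) is indispensable also matches the paper's own discussion of when the monotonicity argument breaks down and Theorem \ref{main3} takes over.
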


\begin{proof}
Since the function $f(x)=x+Q^2x^{-1}$, $x> 0$, is non-decreasing in the interval $[|Q|,+\infty)$, the result follows from (\ref{afeuc}).
\end{proof}

We are thus left with the task of justifying (\ref{auxest}) under the conditions of Theorem \ref{main1}.
We first note that, as remarked by Gibbons \cite{Gi}, inequality (\ref{auxest}) holds true in dimension $n=3$ for {\rm any} Einstein-Maxwell initial data set, irrespective of it being embedded  in $\mathbb R^4$ or not; see also \cite{KWY2}. We now adapt his argument to our situation. First, the stability inequality applied to the function $\xi\equiv 1$ gives
\[
\int_{\Sigma_0}{\rm Ric}_g(\nu)d\Sigma\leq 0.
\]
On the other hand, in our situation the Gauss equation
reduces to
\[
R_h=R_g-2{\rm Ric}_g(\nu),
\]
so we obtain
\begin{equation}\label{sy}
\int_{\Sigma_0}R_gd\Sigma_0\leq \int_{\Sigma_0}R_kd\Sigma_0.
\end{equation}
We now use (\ref{decond}) and Cauchy-Schwarz
to get
\begin{eqnarray*}
\int_{\Sigma_0}R_gd\Sigma_0 &\geq & (n-1)(n-2)\int_{\Sigma_0}|E|_g^2d\Sigma_0\\
& \geq & (n-1)(n-2)|\Sigma_0|^{-1}\left(\int_{\Sigma_0}\langle E,\nu\rangle d\Sigma_0\right)^2\\
& = & (n-1)(n-2)|\Sigma_0|^{-1}\omega_{n-1}^2Q^2.
\end{eqnarray*}
If combined with (\ref{sy}), this means that
\begin{equation}\label{rew2}
Q^2\leq {\widetilde\Yc_{(\Sigma_0,k)}}
\mathfrak R_{\Sigma_0}^2,
\end{equation}
which is just a rewriting of (\ref{rew}).
In any case, this clearly shows that (\ref{condmain2}) implies (\ref{auxest}) and completes the proof of Theorem \ref{main1}.

In order to prove Theorem \ref{main2}, we simply observe that the Alexandrov-Fenchel inequalities in \cite{GL} give
\begin{equation}\label{sec}
\int_{\Sigma_0}R_kd\Sigma_0\geq d_n\left(\int_{\Sigma_0}Hd\Sigma_0\right)^{\frac{n-3}{n-2}},
\end{equation}
with the equality occurring if and only if $\Sigma_0$ is a round sphere, where
\[
d_n=\frac{(n-1)(n-2)\omega_{n-1}}{\left((n-1)\omega_{n-1}\right)^{\frac{n-3}{n-2}}}.
\]
This can be rewritten as
\[
\left(2c_n\int_{\Sigma_0}Hd\Sigma_0\right)^{-1}\geq \left(
\widetilde \Yc_{(\Sigma_0,k)}\right)^{-\frac{n-2}{n-3}}
\mathfrak R_{\Sigma_0}^{-1}.
\]
Thus, Theorem \ref{main2}
follows from (\ref{mainineq}) and (\ref{afeuc}).

\begin{remark}
\label{schwartz}
{\rm The proof of Theorem \ref{main0} also works if, instead of being star-shaped, we assume that the horizon $\Sigma_0\subset\mathbb R^n$ is outer minimizing as in Remark \ref{outer}. In  this case we must use the weak formulation of the inverse mean curvature flow \cite{HI} to carry out the argument. The details are explained in \cite{FS}, where the corresponding Alexandrov-Fenchel inequality extending (\ref{afeuc}) to this setting is established; see in particular the proof of their Lemma 8, which justifies the computation leading to (\ref{chain}).  We thus conclude that our main results actually hold in this generality, except for
Theorem \ref{main2}, since (\ref{sec}) is only known to follow from $2$-convexity in the star-shaped case \cite{GL}.}
\end{remark}

\end{document}